\title[Optimal consensus control model]{Optimal consensus control models on the sphere}
\author[Huang]{Hui Huang}
\address[Hui Huang]{\newline Institute of Mathematics and Scientific Computing,
University of Graz, Universit\"{a}tspl. 3, 8010 Graz, Austria}
\email{hui.huang@uni-graz.at}
\author[Park]{Hansol Park}
\address[Hansol Park]{\newline Department of Mathematics, Simon Fraser University, 8888 University Dr, Burnaby, BC V5A 1S6, Canada}
\email{hansol$\_$park@sfu.ca}
\email{hansol960612@snu.ac.kr}
\newtheorem{theorem}{Theorem}[section]
\newcommand{\bbr}{\mathbb R}
\newcommand{\R}{\mathbb R}
\newcommand{\bbs}{\mathbb S}
\newcommand{\bx}{\mbox{\boldmath $x$}}
\newcommand{\bv}{\mbox{\boldmath $v$}}
\newcommand{\dd}{\mathrm{d}}
\newcommand{\la}{\langle}
\newcommand{\ra}{\rangle}
\begin{document}

\date{\today}

\subjclass{49J15, 34D06} \keywords{optimal control, swarm sphere model, aggregation, synchronization, Pontryagin Minimum Principle}

\thanks{\textbf{Acknowledgment.} The work of H. Park is supported by Pacific Institute for the Mathematical Science (PIMS), Canada postdoctoral fellowship.}

\begin{abstract}
In this paper, we investigate the consensus models on the sphere with control signals, where both the first and second order systems are considered. We provide the existence of the optimal control-trajectory pair and derive the first order optimality condition taking the form of the Pontryagin Minimum Principle. Numeric simulations are also presented to show that the obtained optimal control can help to accelerate the process of reaching a consensus.
\end{abstract}

\maketitle \centerline{\date}

%\tableofcontents

\section{Introduction}
Large systems of interacting particles arise in the study of collective behaviours of biological and physical systems on manifolds, such as nematic patterns on a sphere \cite{sanchez2012spontaneous}, application to launching unmanned spacecrafts \cite{chang2010cooperative}, and crystals on a cylinder \cite{doi1981molecular}. It is receiving lots of attention due to the appearance of a consensus emergence of these particle systems.
This paper addresses centralized control problems for one of the well known consensus models on the sphere, i.e. the swarm sphere model \cite{Lo}, whose first order form is given by:
{\small \begin{align}\label{a-1}
\begin{cases}
\displaystyle \frac{\dd}{\dd t}x_i=\Omega_i x_i+\frac{\kappa}{N}\sum_{k=1}^N\left(x_k-\frac{\langle x_i, x_k\rangle}{\|x_i\|^2} x_i\right),\\
x_i(0)=x_i^0\in\bbs^{d-1},\quad \forall ~i\in[N]:=\{1, 2, \cdots, N\},
\end{cases}
\end{align}}
where $\Omega_i\in\mathrm{Skew}(d):=\{A\in\bbr^{d\times d}: A^\top=-A\}$ is the natural frequency of the $i$-th particle, $\kappa$ is the coupling strength, $x_i^0$ is the initial position of the $i$-th particle, $N$ is the number of particles, and $\|\cdot\|$ represents the standard Euclidean norm. Adding the inertia (mass) term to system \eqref{a-1} as in \cite{H-K} we can obtain the second order model satisfying:
{\small \begin{align}\label{a-2}
\begin{cases}
\displaystyle \frac{\dd}{\dd t}x_i=v_i,\\
\displaystyle \frac{\dd}{\dd t}v_i=-\frac{\gamma}{m} v_i-\frac{\|v_i\|^2}{\|x_i\|^2} x_i+\frac{1}{m}\Omega_i x_i+\frac{\kappa}{mN}\sum_{k=1}^N\left(x_k-\frac{\langle x_i, x_k\rangle}{\|x_i\|^2} x_i\right),\\
\displaystyle x_i(0)=x_i^0\in\bbs^{d-1},\quad v_i(0)= \frac{\dd}{\dd t}x_i\Big|_{t=0+}=v_i^0\in T_{x_i^0}\bbs^{d-1}\quad \forall ~i\in[N],
\end{cases}
\end{align}}
where $m$ is the mass, $\gamma$ is the friction coefficients, and $v_i^0$ is the initial velocity of the $i$-th particle. Actually, the systems \eqref{a-1} and \eqref{a-2} can be also seen as generalized Kuramoto models on the sphere. 
According to \cite{H-K}, if the identical natural frequencies satisfies $\Omega_i\equiv O_{d-1}$ for all $i\in [N]$, then the emergent dynamics for both systems \eqref{a-1} and \eqref{a-2} with generic initial can be observed, namely it satisfies
{\small \[
\lim_{t\to\infty}\|x_i(t)-x_j(t)\|=0\quad\forall~i, j\in [N].
\]}
In this context the consensus is understood as a travelling formulation in which every particle has the same position. However this behaviour strongly depends on the initial configuration of the particle system. In the present work we are interested in the design of centralized control signals enforcing consensus emergence in systems \eqref{a-1} and \eqref{a-2} in order to generate an external intervention able to steer the system towards a desired configuration. Very related to this work, \cite{BBCK,caponigro2013sparse,bongini2015conditional} study the problem of consensus control for Cucker--Smale type models. 
Meanwhile, according to \cite{HLRS}, the Kuramoto model (position synchronization model) can be derived from the Cucker--Smale model (velocity alignment model). So in this work we are aiming to extend those results on velocity alignment as in \cite{BBCK} to our position synchronization models \eqref{a-1} and \eqref{a-2}.

%This is why the two systems \eqref{a-1} and \eqref{a-2} are called swarm sphere models.\\

%Actually, the systems \eqref{a-1} and \eqref{a-2} can be also seen as generalized Kuramoto models on the sphere. Indeed, if
%we substitute the following parameters
%{\small \[
%d=2, \quad x_i=(\cos\theta_i, \sin\theta_i)^\top,\quad \Omega_i=\begin{bmatrix}
%0&-\nu_i\\
%\nu_i&0
%\end{bmatrix}
%\]}
%into system \eqref{a-1}, then one gets the following Kuramoto model:
%{\small \[
%\frac{\dd}{\dd t}\theta_i=\nu_i+\frac{\kappa}{N}\sum_{k=1}^N\sin(\theta_k-\theta_i).
%\]}
 
% given as follows:
%\begin{align}\label{a-5}
%\begin{cases}
%\displaystyle\frac{\dd}{\dd t}x_i=v_i,\\
%\displaystyle\frac{\dd}{\dd t}v_i=\frac{1}{N}\sum_{j=1}^N a(\|x_i-x_j\|)(v_j-v_i)+u_i(t),\\
%x_i(0)=x_i^0\in\bbr^d,\quad v(0)=v_i^0\in\bbr^d,\quad\forall~i\in[N],
%\end{cases}
%\end{align}
%where $\{u_i\}_{i=1}^N$ is the control of the system. Recall that, if $u_i\equiv0$ for all $i\in [N]$ then system \eqref{a-5} is the Cucker--Smale model \cite{CS} on $\bbr^d$. The authors of \cite{BBCK} introduced the optimality conditions and applied the gradient descent method with the Barzilai--Borwein method \cite{BB}. In our work, we convert these results on velocity alignment into the position synchronization model, and provide the proof of the  existence for the optimal control.

\section{Swarm sphere models with controls}\label{sec:2}
In this section we consider both the first order and second order swarm sphere models with control signals. 
\subsection{Second order model}
We consider a set of $N$ particles with state $(x_i(t),v_i(t))\in T\bbs^{d-1}\subset \R^d\times\R^d$ moving on a sphere via system \eqref{a-2} with $\Omega_i\equiv O_{d-1}$ for all $i\in [N]$.
We are interested in the study of consensus emergence, i.e. the convergence towards a configuration in which $
	x_i=\bar x=\frac{1}{N}\sum_{j=1}^Nx_j,~\forall\, i\in[N]\,.$
In particular we are concerned with inducing the consensus through the synthesis of an external forcing term $\mathbf{u}(t):=(u_1(t),\dots,u_N(t))^\top$ in the form of
{\small \begin{align}\label{Kucontro}
	\begin{cases}
		\displaystyle\frac{\dd x_i}{dt}=v_i,\\
		\displaystyle\frac{\dd v_i}{dt}=-\frac{\|v_i\|^2}{\|x_i\|^2}x_i-\frac{\gamma}{m}v_i+\frac{\kappa}{mN}\sum_{j=1}^N\left(x_j-\frac{\la x_i,x_j\ra}{\|x_i\|^2}x_i\right)+\left(u_i-\frac{\la u_i,x_i\ra x_i}{\|x_i\|^2}\right)\\
\displaystyle x_i(0)=x_i^0\in\bbs^{d-1}, \quad v_i(0)=\frac{\dd}{\dd t}x_i\Big|_{t=0+}=v_i^0\in T_{x_i^0}\bbs^{d-1}\quad \forall ~i\in[N],
	\end{cases}
\end{align}}
where the control signals $u_i\in L^2([0,T];\R^d)=:\mathcal{U}$. Formally, for $T>0$ and given a set of admissible control signals $\textbf{u}\in \mathcal{U}^N$ for the entire population, it holds that 
$\|u_i\|_2\leq \max_{i\in[N]}\{\|u_i\|_2\}=:M$ for all $i\in[N]$. Our goal is then to seek a solution to the minimization problem
{\small \begin{align}\label{min}
	\min\limits_{\mathbf{u}(\cdot)\in \mathcal{U}^N} \mathcal{J}(\mathbf{u}(\cdot);\mathbf{x}(0),\mathbf{v}(0)):=\int_0^T\frac{1}{N}\sum_{j=1}^{N}\|x_j-\bar x\|^2dt+\lambda\int_0^T\frac{1}{N}\sum_{j=1}^{N}\|u_j\|^2dt\,,
\end{align}}
with some regularization parameter $\lambda>0$. Here we have used the notation $\mathbf{x}(t):=(x_1(t),\dots,x_N(t))^\top$ and $\mathbf{v}(t):=(v_1(t),\dots,v_N(t))^\top$.
\begin{theorem}\label{thmexistence}
	For any given $T>0$ and $\mathbf{u}(\cdot)\in \mathcal{U}^N$, assume the initial data $(\mathbf{x}(0),\mathbf{v}(0))$ and parameters $m,\gamma,\kappa,T$ satisfy
{\small 	$$\frac{m}{\gamma}\left(\mathcal{V}(0)+\frac{2\kappa T}{m}+2MT^{\frac{1}{2}}\right)\left(\exp\left(\frac{\gamma T}{m}\right)-1\right)<1\,,$$}
	then there exists a pathwise unique global solution $\{(x_i,v_i)\}_{i=1}^N$ to the Kuramoto system \eqref{Kucontro} up to time $T$. Moreover for all $i\in[N]$ and $t\in[0,T]$ it holds that 
{\small 	\begin{equation}\label{Xbd}
	\|x_i(t)\|=1, \quad \la x_i(t),v_i(t)\ra=0
	\end{equation}}
	and
{\small 	\begin{equation}\label{Vbd}
			\sup\limits_{t\in[0,T]}\mathcal{V}(t)\leq \frac{\exp\left(\frac{\gamma T}{m}\right)\left(\mathcal{V}(0)+\frac{2\kappa T}{m}+2MT^{\frac{1}{2}}\right)}{1-\frac{m}{\gamma}\left(\mathcal{V}(0)+\frac{2\kappa T}{m}+2MT^{\frac{1}{2}}\right)\left(\exp\left(\frac{\gamma T}{m}\right)-1\right)}=:C_V\,,
	\end{equation}}
where $\mathcal{V}(t)=\max\limits_{i=1,\dots,N}\|v_i(t)\|$.
\end{theorem}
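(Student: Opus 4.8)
The plan is to prove \eqref{Xbd}--\eqref{Vbd} together with global solvability by the classical local-existence-plus-a-priori-estimate scheme for ODEs, the only nonroutine ingredient being a Riccati-type Gr\"{o}nwall bound whose blow-up threshold coincides exactly with the stated smallness hypothesis. For a fixed control $\mathbf u\in\mathcal U^N$, the right-hand side of \eqref{Kucontro} is measurable in $t$ through $u_i\in L^2$ and smooth, in particular locally Lipschitz, in the state $(\mathbf x,\mathbf v)$ on the open set where $\|x_i\|\neq 0$ for every $i$, where the singular factors $1/\|x_i\|^2$ are harmless. Since $\|x_i^0\|=1$, the Carath\'{e}odory existence theorem gives a unique absolutely continuous solution on a maximal interval $[0,t^*)$.

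\textbf{The invariants \eqref{Xbd}.} I would next verify these on $[0,t^*)$, since they are what keep the trajectory away from the singular set and make the a priori bound possible. Taking the inner product of the $v_i$-equation with $x_i$, one finds (using only $\|x_i\|\neq 0$) that the coupling and control contributions cancel pairwise and the centripetal term yields $-\|v_i\|^2$, so that $\la x_i,\dot v_i\ra=-\|v_i\|^2-\frac{\gamma}{m}\la x_i,v_i\ra$. Since $\dot x_i=v_i$, this gives the scalar linear ODE $\frac{\dd}{\dd t}\la x_i,v_i\ra=-\frac{\gamma}{m}\la x_i,v_i\ra$; with $\la x_i^0,v_i^0\ra=0$ it forces $\la x_i,v_i\ra\equiv 0$, whence $\frac{\dd}{\dd t}\|x_i\|^2=2\la x_i,v_i\ra=0$ and $\|x_i\|\equiv 1$.

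\textbf{The velocity bound \eqref{Vbd} (the crux).} With $\|x_i\|=1$ and $\la x_i,v_i\ra=0$ in hand, I would estimate each term of $\dot v_i$ in norm: the centripetal term contributes $\|v_i\|^2$, the friction term $\frac{\gamma}{m}\|v_i\|$ (bounded crudely, discarding its favorable sign), the coupling term $\frac{2\kappa}{m}$ since each summand has norm at most $2$, and the projected control $2\|u_i\|$. Integrating $\|v_i(t)\|\leq\|v_i(0)\|+\int_0^t\|\dot v_i\|\,\dd s$, applying Cauchy--Schwarz in the form $\int_0^t\|u_i\|\,\dd s\leq MT^{1/2}$, and taking the maximum over $i$ yields the scalar integral inequality $\mathcal V(t)\leq C_0+\frac{\gamma}{m}\int_0^t\mathcal V(s)\,\dd s+\int_0^t\mathcal V(s)^2\,\dd s$ with $C_0:=\mathcal V(0)+\frac{2\kappa T}{m}+2MT^{1/2}$. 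Comparison with the Riccati equation $y'=\frac{\gamma}{m}y+y^2$, $y(0)=C_0$, whose explicit solution $y(t)=\frac{C_0\exp(\gamma t/m)}{1-\frac{m}{\gamma}C_0(\exp(\gamma t/m)-1)}$ is increasing and remains finite on $[0,T]$ precisely when the theorem's hypothesis holds, then gives $\mathcal V(t)\leq y(T)=C_V$.

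\textbf{Global extension and the main obstacle.} Since $\|x_i\|\equiv 1$ and $\|v_i\|\leq C_V$ keep the solution inside a compact subset of the domain of the locally Lipschitz vector field, away from the singular set, the standard continuation criterion forces $t^*\geq T$, yielding the global solution. The genuine difficulty is the third step: one must recognize that the centripetal force $\frac{\|v_i\|^2}{\|x_i\|^2}x_i$ produces a true $\mathcal V^2$ term, so linear Gr\"{o}nwall is insufficient and the nonlinear comparison is unavoidable, and one must match the blow-up time of the comparison ODE to the smallness condition (using that the denominator of $y$, being decreasing in $t$, stays positive on all of $[0,T]$ once it is positive at $T$). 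A minor technical point, cleanly sidestepped by arguing with the integral inequality for each $i$ before taking the supremum, is that $\mathcal V=\max_i\|v_i\|$ is only Lipschitz and not everywhere differentiable.
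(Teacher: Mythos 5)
Your proof is correct and follows essentially the same route as the paper's: the same invariant computation giving $\|x_i\|\equiv 1$ and $\la x_i,v_i\ra\equiv 0$, the same quadratic integral inequality for the velocities, and the same identification of the smallness hypothesis with non-blow-up of the comparison solution at time $T$. The only differences are cosmetic: where the paper regularizes the factor $1/\|x_i\|^2$ by a smooth truncation $\phi$ to obtain local well-posedness and uniqueness, you work directly on the open set where $\|x_i\|\neq 0$ and close with a standard continuation argument, and you are more explicit than the paper (which simply invokes ``Gronwall's lemma'') that the $\|v_i\|^2$ term forces a genuine Riccati-type comparison.
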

\begin{proof}
	Let $\phi$ be a map on $\R^d$ with bounded derivatives of all orders such that $\phi(x)=\frac{x}{\|x\|^2}$ for all $x$ with $\|x\|\geq \frac{1}{2}$. Then we consider the following regularized system
{\small 	\begin{align}\label{RKucontro}
		\begin{cases}
			\displaystyle\frac{\dd x_i}{\dd t}=v_i,\\
			\displaystyle\frac{\dd v_i}{\dd t}=-\phi(x_i)\|v_i\|^2-\frac{\gamma}{m}v_i+\frac{\kappa}{m}\frac{1}{N}\sum_{j=1}^N(x_j-\la x_i,x_j\ra \phi(x_i))+u_i-\la u_i,x_i\ra \phi(x_i)\\
\displaystyle x_i(0)=x_i^0\in\bbs^{d-1}, \quad v_i(0)=\frac{\dd}{\dd t}x_i\Big|_{t=0+}=v_i^0\in T_{x_i^0}\bbs^{d-1}\quad \forall ~i\in[N],
		\end{cases}
	\end{align}}
It is obvious that  the coefficients are local Lipschitz, so one has local existence and uniqueness up to some time $\tau\in [0,T]$. 

Next we prove the global existence up to time $T$. Actually, as long as $\|x_i\|\geq \frac{1}{2}$ one has
{\small \begin{align*}
	\frac{\dd}{\dd t}\la x_i, v_i \ra=\|v_i\|^2+\left\la x_i,\frac{\dd v_i}{\dd t}\right\ra=\|v_i\|^2-\|v_i\|^2-\frac{\gamma}{m}\la x_i,v_i\ra=-\frac{\gamma}{m}\la x_i,v_i\ra\,,
\end{align*}}
which implies $\la x_i(t), v_i(t) \ra=\la x_i(0), v_i(0)\ra e^{-\frac{\gamma}{m}t}=0$ for all $t\in[0,T]$ since $\la x_i(0), v_i(0)\ra =0$. Then we have
{\small \begin{align*}
	\frac{\dd}{\dd t}\|x_i\|^2=2\la x_i, v_i \ra=0\,,
\end{align*}}
which following the initial condition $\|x_i(0)\|=1$ leads to $\|x_i(t)\|=1$ for all $t\in[0,T]$. In addition, notice that
{\small \begin{align*}
	\|v_i(t)\|\leq \|v_i(0)\|+\int_0^t(\|v_i(s)\|^2+\frac{\gamma}{m}\|v_i(s)\|)ds+\frac{2\kappa T}{m}+2\int_0^t\|u_i(s)\|ds\,.
\end{align*}}
Using Gronwall's lemma one has
{\small \begin{align*}
	\sup\limits_{t\in[0,T]}\|v_i(t)\|&\leq \frac{\exp(\frac{\gamma T}{m})(\|v_i(0)\|+\frac{2\kappa T}{m}+2\int_0^T\|u^i(s)\|ds)}{1-\frac{m}{\gamma}(\|v_i(0)\|+\frac{2\kappa T}{m}+2\int_0^T\|u^i(s)\|ds)(\exp(\frac{\gamma T}{m})-1)}\leq \frac{\exp(\frac{\gamma T}{m})(\|v_i(0)\|+\frac{2\kappa T}{m}+2MT^{\frac{1}{2}})}{1-\frac{m}{\gamma}(\|v_i(0)\|+\frac{2\kappa T}{m}+2MT^{\frac{1}{2}})(\exp(\frac{\gamma T}{m})-1)}
	\,.
\end{align*}}
This means that if initially $\frac{m}{\gamma}(\|v_i(0)\|+\frac{2\kappa T}{m}+2MT^{\frac{1}{2}})(\exp(\frac{\gamma T}{m})-1)<1$, then we have global existence and pathwise uniqueness for \eqref{RKucontro} up to time $T$. 

Now the solution to \eqref{RKucontro} for the given $\phi$ is a solution to \eqref{Kucontro} since $\|x_i\|=1$ for all $i=1,\dots,N$ and $t\in[0,T]$, which provides global existence to \eqref{Kucontro}. If we consider two solutions to \eqref{Kucontro} for the same initial data, then they satisfies $\|x_i\|=1$. So they are also solutions to the regularized system \eqref{RKucontro}, for which pathwise uniqueness holds. Hence they are equal, which completes the proof.
\end{proof}
From \eqref{Xbd}, now we can assume $\|x_i(t)\|=1$ for all $t\geq0$ and $i\in [N]$ if $\{(x_i, v_i)\}_{i=1}^N$ satisfy \eqref{Kucontro},  and one can prove the existence of the optimal control-trajectory pair:

\begin{theorem}\label{thmcon}
	Under the same assumptions as in Theorem \ref{thmexistence}, there exists some control $u_i^*\in L^2(0,T;\R^d)$, $i\in[N]$ and the corresponding $\{(x_i^*,v_i^*)\}_{i=1}^N$ trajectories solving the optimal control problem \eqref{Kucontro}-\eqref{min}.
\end{theorem}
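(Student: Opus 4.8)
The plan is to apply the direct method of the calculus of variations. Since $\mathcal{J}(\mathbf{u})\ge 0$ for every admissible control, the value $J^*:=\inf_{\mathbf{u}}\mathcal{J}(\mathbf{u})$ is finite, and I fix a minimizing sequence $\{\mathbf{u}^n\}\subset\mathcal{U}^N$ with $\|u_i^n\|_{L^2}\le M$ and $\mathcal{J}(\mathbf{u}^n)\to J^*$. The regularization term in \eqref{min} supplies coercivity: $\lambda\int_0^T\frac1N\sum_j\|u_j^n\|^2\,\dd t\le\mathcal{J}(\mathbf{u}^n)$ stays bounded, so $\{\mathbf{u}^n\}$ is bounded in the Hilbert space $(L^2([0,T];\R^d))^N$. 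By the Banach--Alaoglu theorem I extract a subsequence (not relabeled) with $\mathbf{u}^n\rightharpoonup\mathbf{u}^*$ weakly in $L^2$. Because the admissible set $\{\mathbf{u}:\|u_i\|_{L^2}\le M\}$ is convex and norm-closed, it is weakly closed, so $\mathbf{u}^*$ is again admissible and the smallness hypothesis of Theorem \ref{thmexistence} still applies to it.

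Next I would establish strong compactness of the associated trajectories. Let $(\mathbf{x}^n,\mathbf{v}^n)$ denote the solution of \eqref{Kucontro} driven by $\mathbf{u}^n$, which exists by Theorem \ref{thmexistence}. From \eqref{Xbd} we have $\|x_i^n\|\equiv1$, and from \eqref{Vbd} the uniform bound $\|v_i^n(t)\|\le C_V$. Since $\tfrac{\dd}{\dd t}x_i^n=v_i^n$ is bounded in $L^\infty$, the family $\{x_i^n\}$ is equi-Lipschitz and uniformly bounded, so Arzel\`a--Ascoli yields $x_i^n\to x_i^*$ uniformly on $[0,T]$ along a further subsequence, with $x_i^*$ Lipschitz. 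For the velocities, every term on the right-hand side of the $v_i$-equation except the control is bounded in $L^\infty$, while $u_i^n-\la u_i^n,x_i^n\ra x_i^n$ is bounded in $L^2$ (using $\|x_i^n\|=1$ and $\|u_i^n\|_{L^2}\le M$); hence $\tfrac{\dd}{\dd t}v_i^n$ is bounded in $L^2$ and $\{v_i^n\}$ is bounded in $H^1([0,T];\R^d)$. The compact embedding $H^1\hookrightarrow C([0,T])$ then gives $v_i^n\to v_i^*$ uniformly.

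With both components converging uniformly I would pass to the limit in the integral form of \eqref{Kucontro}. The nonlinear terms $\|v_i^n\|^2x_i^n$, $\tfrac{\gamma}{m}v_i^n$, and the interaction sum are continuous functions of $(\mathbf{x}^n,\mathbf{v}^n)$ and therefore converge uniformly to their counterparts at $(\mathbf{x}^*,\mathbf{v}^*)$. The only delicate passage is the control contribution $u_i^n-\la u_i^n,x_i^n\ra x_i^n$, which couples the weakly convergent $u_i^n$ with the state. Writing $\la u_i^n,x_i^n\ra x_i^n=(x_i^n\otimes x_i^n)u_i^n$ and splitting
{\small \begin{align*}
(x_i^n\otimes x_i^n)u_i^n-(x_i^*\otimes x_i^*)u_i^*=(x_i^n\otimes x_i^n-x_i^*\otimes x_i^*)u_i^n+(x_i^*\otimes x_i^*)(u_i^n-u_i^*)\,,
\end{align*}}
the first term tends to zero because $\|x_i^n\otimes x_i^n-x_i^*\otimes x_i^*\|_{L^\infty}\to0$ and $\|u_i^n\|_{L^2}\le M$, while the second tends to zero by testing the weak convergence $u_i^n\rightharpoonup u_i^*$ against the fixed $L^2$ weight $(x_i^*\otimes x_i^*)\mathbf 1_{[0,t]}$. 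This weak--strong product is the main obstacle, and it is precisely the strong (uniform) convergence of the states, guaranteed by the a priori bounds \eqref{Xbd}--\eqref{Vbd}, that resolves it. Consequently $(\mathbf{x}^*,\mathbf{v}^*)$ solves \eqref{Kucontro} with control $\mathbf{u}^*$.

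Finally I would verify optimality through weak lower semicontinuity of $\mathcal{J}$. The running cost $\int_0^T\frac1N\sum_j\|x_j^n-\bar x^n\|^2\,\dd t$ converges to the same expression at $\mathbf{x}^*$ by the uniform convergence $x_i^n\to x_i^*$, whereas $\int_0^T\frac1N\sum_j\|u_j^*\|^2\,\dd t\le\liminf_n\int_0^T\frac1N\sum_j\|u_j^n\|^2\,\dd t$ since the $L^2$-norm is weakly lower semicontinuous. Adding these yields $\mathcal{J}(\mathbf{u}^*)\le\liminf_n\mathcal{J}(\mathbf{u}^n)=J^*$, and as $\mathbf{u}^*$ is admissible we conclude $\mathcal{J}(\mathbf{u}^*)=J^*$. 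Thus $\mathbf{u}^*=(u_1^*,\dots,u_N^*)$ together with its trajectory $\{(x_i^*,v_i^*)\}_{i=1}^N$ solves the control problem \eqref{Kucontro}--\eqref{min}.
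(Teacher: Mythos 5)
Your proposal is correct and follows essentially the same route as the paper: a minimizing sequence, Banach--Alaoglu for weak $L^2$ convergence of the controls, the a priori bounds of Theorem \ref{thmexistence} plus Arzel\`a--Ascoli (the paper) or the equivalent $H^1\hookrightarrow C$ compactness (your version) for uniform convergence of the trajectories, passage to the limit in the equation via the weak--strong pairing, and weak lower semicontinuity of the $L^2$-norm for the cost. Your explicit splitting of the term $\la u_i^n,x_i^n\ra x_i^n$ is a welcome elaboration of a step the paper treats tersely, but it does not change the argument.
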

\begin{proof}
	For any given $u_i\in L^2(0,T;\R^d)$, $i\in[N]$, by Theorem \ref{thmexistence} there exists a corresponding solution  $\{(x_i^u,v_i^u)\}_{i=1}^N$ to \eqref{Kucontro}. Note that $u_i=0\in L^\infty(0,T;\R^d)$, so that
{\small 	\begin{equation*}
		\mathcal{J}(\mathbf{0};\mathbf{x}(0),\mathbf{v}(0))=\int_0^T\frac{1}{N}\sum_{j=1}^{N}\|x_j^0-\bar x^0\|^2\dd t\leq 4T
	\end{equation*}}
because $\|x_i^0\|=1$, where $\{(x_i^0,v_i^0)\}_{i=1}^N$ is the solution to \eqref{Kucontro} with given $u_i=0$. Since $\mathcal{J}(\mathbf{u}(\cdot);\mathbf{x}(0),\mathbf{v}(0))\geq 0$ for all $\mathbf{u}(\cdot)\in \mathcal{U}^N$, it holds that
{\small \begin{equation*}
0\leq \min\limits_{\mathbf{u}(\cdot)\in \mathcal{U}^N}\mathcal{J}(\mathbf{u}(\cdot);\mathbf{x}(0),\mathbf{v}(0))\leq \mathcal{J}(\mathbf{0};\mathbf{x}(0),\mathbf{v}(0))\leq 4T\,.	
\end{equation*}}
By the definition of the infimum, we know that there exists a sequence  of controls $(\mathbf{u}^n)_{n\in \mathbb N}\subset \mathcal{U}^N$ with corresponding $(\bx^{u^n},\bv^{u^n})$ solving \eqref{Kucontro}, such that 
{\small \begin{equation*}
	0\leq \min\limits_{\mathbf{u}(\cdot)\in \mathcal{U}^N}\mathcal{J}(\mathbf{u}(\cdot);\mathbf{x}(0),\mathbf{v}(0))\leq \mathcal{J}(\mathbf{u}^n(\cdot);\mathbf{x}(0),\mathbf{v}(0))\leq \min\limits_{\mathbf{u}(\cdot)\in \mathcal{U}^N}\mathcal{J}(\mathbf{u}(\cdot);\mathbf{x}(0),\mathbf{v}(0))+\frac{1}{n}\,,
\end{equation*}}
and so
{\small \begin{equation*}
	\lim_{n\to\infty}\mathcal{J}(\mathbf{u}^n(\cdot);\mathbf{x}(0),\mathbf{v}(0))= \min\limits_{\mathbf{u}(\cdot)\in \mathcal{U}^N}\mathcal{J}(\mathbf{u}(\cdot);\mathbf{x}(0),\mathbf{v}(0))\,.
\end{equation*}}
Notice that
{\small \begin{equation*}
	\lambda\int_0^T\frac{1}{N}\sum_{j=1}^{N}\|u_j^n\|^2\dd t\leq \mathcal{J}(\mathbf{u}^n(\cdot);\mathbf{x}(0),\mathbf{v}(0))<\infty\,,
\end{equation*}}
and by Banach-Alaoglu theorem, for all $i$ there exists a subsequence $(u_i^{n_k})_{k\in \mathbb N}$ and $u_i^*\in L^2(0,T;\R^d)$ such that
{\small \begin{equation}\label{weakcon}
u_i^k:=u_i^{n_{k}} \underset{k \rightarrow+\infty}{\rightharpoonup} u_i^* \quad \text { in } L^{2}(0, T; \mathbb{R}^{d})\,.
\end{equation}}
For the corresponding solutions $(\bx^{k},\bv^{k})_{k\in\mathbb N}:=(\bx^{u^{n_k}},\bv^{u^{n_k}})_{k\in\mathbb N}$, we have from Theorem \ref{thmexistence} that
{\small \begin{align*}
	\max\limits_{i=1,\dots,N}\|\dot{v_i}^k(t)\|&\leq \max\limits_{i=1,\dots,N}\|v_i^k(t)\|^2+\frac{\gamma}{m}\max\limits_{i=1,\dots,N}\|v_i^k(t)\|+\frac{2\kappa}{m}+2\max\limits_{i=1,\dots,N}\|u_i^k(t)\|\leq C_V^2+\frac{C_V\gamma}{m}+\frac{2\kappa}{m}+2M\,.
\end{align*}}
This combining with \eqref{Vbd} implies the equi-boundedness and equi-absolute continuity of $v_i^k(t)$ uniformly with respect to $k$, for all $i=1,\dots,N$. This also yields the equi-Lispchitzanity of $x_i^k(t)$. By Ascoli-Arzela theorem there exits a subsequence, again renamed  $(\bx^{k},\bv^{k})_{k\in\mathbb N}$ and an absolutely continuous trajectory $(\bx^*,\bv^*)$ in $[0,T]$ such that for $k\to\infty$:
{\small \begin{align}\label{stongcon}
\begin{cases}
x_i^k \rightarrow x_i^*,\mbox{ in }[0,T],\mbox{ for all }i=1,\cdots,N,\\
v_i^k \rightarrow v_i^*,\mbox{ in }[0,T],\mbox{ for all }i=1,\cdots,N,\\
\dot{x}_i^k \rightarrow \dot{x}_i^*,\mbox{ in }[0,T],\mbox{ for all }i=1,\cdots,N,\\
\dot{v}_i^k \rightharpoonup\dot{v}_i^*,\mbox{ in }L^2(0,T;\R^d),\mbox{ for all }i=1,\cdots,N\,.
\end{cases}
\end{align}}
Thus  it is easy to see that
{\small \begin{equation}\label{eqstar1}
\frac{\dd x_i^*}{\dd t}=v_i^* \mbox{ and }
\lim\limits_{k\to \infty}\int_0^T\frac{1}{N}\sum_{j=1}^{N}\|x_j^k-\bar x^k\|^2\dd t=\int_0^T\frac{1}{N}\sum_{j=1}^{N}\|x_j^*-\bar x^*\|^2\dd t\,.
\end{equation}}
Moreover, one notice that for all $\psi\in L^2(0,T;\R^d)$ 
{\small \begin{align*}
&\int_0^T\psi\dot{v}_i^k \dd t=\int_0^T\left(-\frac{\|v_i^k \|^2}{\|x_i^k \|^2}\la x_i^k ,\psi\ra-\frac{\gamma}{m}\la v_i^k ,\psi\ra+\frac{\kappa}{m}\frac{1}{N}\sum_{j=1}^N\left(\la x_j^k ,\psi\ra-\frac{\la x_i^k ,x_j^k \ra}{\|x_i^k \|^2}\la x_i^k ,\psi\ra\right)+\la u_i^k ,\psi\ra-\frac{\la u_i^k ,x_i^k \ra \la x_i^k,\psi\ra}{\|x_i^k \|^2}\right) \dd t
\end{align*}}
Let $k\to \infty$, and by using \eqref{weakcon} and \eqref{stongcon} we have
{\small \begin{align*}
&\int_0^T\psi\dot{v}_i^*\dd t=\int_0^T\left(-\frac{\|v_i^*\|^2}{\|x_i^* \|^2}\la x_i^* ,\psi\ra-\frac{\gamma}{m}\la v_i^* ,\psi\ra+\frac{\kappa}{m}\frac{1}{N}\sum_{j=1}^N\left(\la x_j^*,\psi\ra-\frac{\la x_i^*,x_j^* \ra}{|x_i^* |^2}\la x_i^* ,\psi\ra\right)+\la u_i^* ,\psi\ra-\frac{\la u_i^*,x_i^* \ra \la x_i^*,\psi\ra}{\|x_i^* \|^2}\right) \dd t\,,
\end{align*}}
which leads to
{\small \begin{equation}\label{eqstar2}
\frac{\dd v_i^*}{\dd t}=-\frac{\|v_i^*\|^2}{\|x_i^*\|^2}x_i^*-\frac{\gamma}{m}v_i^*+\frac{\kappa}{m}\frac{1}{N}\sum_{j=1}^N\left(x_j^*-\frac{\la x_i^*,x_j^*\ra}{\|x_i^*\|^2}x_i^*\right)+u_i^*-\frac{\la u_i^*,x_i^*\ra x_i^*}{\|x_i^*\|^2}\quad \mbox{a.e.}\,.
\end{equation}}
Since $(u_i^k)_{k\in\mathbb N}\subset L^2(0,T;\R^d)$ converges weakly to $u_i^*$ for all $i\in[N]$, we also have
{\small \begin{equation*}
\lambda\int_0^T\frac{1}{N}\sum_{j=1}^{N}\|u_j^*\|^2\dd t\leq \lim\inf\limits_{k\to \infty }\lambda\int_0^T\frac{1}{N}\sum_{j=1}^{N}\|u_j^k\|^2\dd t
\end{equation*}}
by the weak lower-semicontinuity of the $L^2$-norm. This implies that
{\small \begin{align*}
	&\mathcal{J}(\mathbf{u}^*(\cdot);\mathbf{x}(0),\mathbf{v}(0))=\int_0^T\frac{1}{N}\sum_{j=1}^{N}\|x_j^*-\bar x^*\|^2\dd t+\lambda\int_0^T\frac{1}{N}\sum_{j=1}^{N}\|u_j^*\|^2\dd t\notag\\
	\leq&\lim\inf\limits_{k\to \infty }\left(\int_0^T\frac{1}{N}\sum_{j=1}^{N}\|x_j^k-\bar x^k\|^2\dd t+\lambda\int_0^T\frac{1}{N}\sum_{j=1}^{N}\|u_j^k\|^2\dd t\right)\notag\\
	=&\lim\inf\limits_{k\to \infty }\mathcal{J}(\mathbf{u}^k(\cdot);\mathbf{x}(0),\mathbf{v}(0))=\lim_{n\to\infty}\mathcal{J}(\mathbf{u}^n(\cdot);\mathbf{x}(0),\mathbf{v}(0))= \min\limits_{\mathbf{u}(\cdot)\in \mathcal{U}^N}\mathcal{J}(\mathbf{u}(\cdot);\mathbf{x}(0),\mathbf{v}(0))\,.
\end{align*}}
This together with \eqref{eqstar1} and \eqref{eqstar2} implies the limit $(\textbf{x}^*,\textbf{v}^*,\textbf{u}^*)$ is a solution to the optimal control problem \eqref{Kucontro}-\eqref{min}.
\end{proof}

While the existence of $(\mathbf{x}^*,\mathbf{v}^*,\mathbf{u}^*)$ to the optimal control problem \eqref{Kucontro}-\eqref{min} has been obtained in Theorem \ref{thmcon}, the Pontryagin Minimum Principle \cite{Evans} yields first-order necessary conditions for the optimal control. Let $(p_i^*(t),q_i^*(t))\in \R^d\times \R^d$ be adjoint variables associated to $(x_i^*,v_i^*)$, and we set $\mathbf{p}^*=\{p_i^*\}_{i=1}^N$, $\mathbf{q}^*=\{q_i^*\}_{i=1}^N$. Then the optimality system consists of a solution $(\mathbf{x}^*,\mathbf{v}^*,\mathbf{u}^*,\mathbf{p}^*,\mathbf{q}^*)$ satisfying \eqref{Kucontro} along with the adjoint equations
{\small \begin{align}\label{secondadj}
	\begin{cases}
\displaystyle-\frac{\dd p_i^*}{\dd t}=-\|v_i^*\|^2q_i^*+\frac{\kappa}{mN}\sum_{j=1}^{N}\left(q_j^*-\la x_i^*,x_j^*\ra q_i^*-\la x_i^*,q_i^*\ra x_j^*-\la x_j^*,q_j^*\ra x_j^*\right)-q_i^*\la u_i^*,x_i^*\ra-u_i^*\la x_i^*,q_i^*\ra-\frac{2}{N}(\langle \bar{x}^*, \bar{x}_i^*\rangle\bar x^*-x_i^*)\,,\\
\displaystyle-\frac{\dd q_i^*}{\dd t}= p_i^*-2\la x_i^*,q_i^*\ra v_i^*-\frac{\gamma}{m}q_i^*\,,\\
p_i^*(T)=0,q_i^*(T)=0,\quad i\in[N]\,,
	\end{cases}
\end{align}}
and the optimality condition
{\small \begin{equation*}
	\mathbf{u}^*=\arg\min_{\mathbf{w}\in\R^{dN}}\sum_{j=1}^{N}\left(\left\la q_j^*,\frac{\dd v_j^*}{\dd t}\right\ra +\frac{\lambda}{N}|w_j|^2\ \right)=-\frac{N}{2\lambda}\left[q_j^*-\la q_j^*, x_j^*\ra x_j^*\right]_{j=1}^N.
\end{equation*}}

Recall that the gradient of the functional $\mathcal{J}$ introduces in \eqref{min} is given as follows:
{\small \begin{align}\label{secondgrad}
\nabla \mathcal{J}=\frac{2\lambda}{N}\mathbf{u}+\left[q_j-\la q_j, x_j\ra x_j\right]_{j=1}^N=\left[
\frac{2\lambda}{N}u_j+(q_j-\la q_j, x_j\ra x_j)
\right]_{j=1}^N.
\end{align}}
We will apply this gradient form to the gradient descent method in Section \ref{sec:3.2}.

\subsection{First order model}
We are also interested in the following first-order control problem:
{\small \begin{align}\label{C-1}
	\begin{cases}
		\displaystyle \frac{\dd}{\dd t}x_i=\frac{\kappa}{N}\sum_{k=1}^N\left(x_k-\langle x_i, x_k\rangle x_i\right)+u_i-\langle u_i, x_i\rangle x_i,\\
		x_i(0)=x_i^0\in\bbs^{d-1}\quad \forall ~i\in[N],
	\end{cases}
\end{align}}
with the following payoff functional
{\small \begin{equation}
	\tilde{\mathcal{J}}(\mathbf{u}(\cdot);\mathbf{x}(0))=\int_0^T\frac{1}{N}\sum_{i=1}^N\left(\|x_i-\bar{x}\|^2+\lambda\|u_i\|^2\right)\dd t.\label{C-1'}
\end{equation}}
The existence of the above optimal control problem can be obtained similarly as the second order model, so here we omit the proofs and only present the theorems:
\begin{theorem}\label{thmexistence1st}
	For any $T>0$ and given $\mathbf{u}(\cdot)\in \mathcal{U}^N$,
	there exists a pathwise unique global solution $\{x_i\}_{i=1}^N$ to the Kuramoto system \eqref{C-1} up to time $T$. Moreover for all $i\in[N]$ and $t\in[0,T]$ it holds that $\|x_i(t)\|=1$.
\end{theorem}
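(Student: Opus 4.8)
The plan is to follow the same scheme as in the proof of Theorem \ref{thmexistence}, while observing that the first-order system \eqref{C-1} is in fact considerably simpler, because its right-hand side carries no $1/\|x_i\|^2$ singularity: since we seek solutions with $\|x_i\|=1$, the coupling and control terms are already written as polynomials in $\mathbf{x}$ and affine in $u_i$. Concretely, writing the system as $\dot{\mathbf{x}}=F(t,\mathbf{x})$ with $F$ a polynomial of degree three in $\mathbf{x}$ whose coefficients depend measurably on $t$ through $u_i(t)\in L^2(0,T;\R^d)$, the map $F$ is locally Lipschitz in $\mathbf{x}$ for a.e.\ $t$ and satisfies $F(\cdot,\mathbf{x})\in L^2(0,T;\R^{dN})$ for each fixed $\mathbf{x}$. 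Hence, unlike the second-order case, no regularization via an auxiliary $\phi$ is needed: the Carathéodory existence theory yields a unique absolutely continuous local solution on some interval $[0,\tau]$ with $\tau\in(0,T]$.

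The second step is to establish the norm preservation $\|x_i(t)\|=1$. Differentiating and using $\la x_i,x_k-\la x_i,x_k\ra x_i\ra=\la x_i,x_k\ra(1-\|x_i\|^2)$, together with the analogous identity for the control term, one obtains for each $i$
\begin{align*}
\frac{\dd}{\dd t}\|x_i\|^2=2\la x_i,\dot{x}_i\ra=2\bigl(1-\|x_i\|^2\bigr)\Bigl(\frac{\kappa}{N}\sum_{k=1}^N\la x_i,x_k\ra+\la u_i,x_i\ra\Bigr).
\end{align*}
Setting $y_i:=\|x_i\|^2-1$, this reads $\dot{y}_i=-2g_i(t)\,y_i$ with $g_i(t):=\frac{\kappa}{N}\sum_{k}\la x_i,x_k\ra+\la u_i,x_i\ra$, which is integrable on $[0,\tau]$ since $x_i$ is bounded there and $u_i\in L^2\subset L^1$. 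As $y_i(0)=0$, the linear scalar ODE forces $y_i(t)=y_i(0)\exp\bigl(-2\int_0^t g_i\,\dd s\bigr)=0$, i.e.\ $\|x_i(t)\|=1$ throughout the interval of existence.

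The third step upgrades this to global existence on $[0,T]$. The a priori bound $\|x_i(t)\|=1$ confines the solution to the compact invariant set $(\bbs^{d-1})^N$, so it cannot blow up in finite time, and the standard continuation argument extends the unique local solution to all of $[0,T]$. Pathwise uniqueness on $[0,T]$ then follows exactly as in Theorem \ref{thmexistence}: any two solutions issued from the same initial data both satisfy $\|x_i\|=1$, hence evolve on the compact set $(\bbs^{d-1})^N$ on which the polynomial field $F$ is uniformly Lipschitz in $\mathbf{x}$, so Grönwall's inequality forces them to coincide.

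I do not anticipate a genuine obstacle here, since all the computations are elementary and the singularity that made the second-order argument delicate is simply absent. The only point requiring mild care is invoking the Carathéodory framework in place of classical Picard--Lindel\"of so as to accommodate the merely $L^2$-in-time control; this is routine once one notes, as above, that $F(\cdot,\mathbf{x})\in L^2$ for each fixed $\mathbf{x}$ and $F(t,\cdot)$ is locally Lipschitz for a.e.\ $t$.
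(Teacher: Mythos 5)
Your proof is correct. Note that the paper itself omits the proof of this theorem, stating only that it can be obtained "similarly as the second order model," so the relevant comparison is with the proof of Theorem \ref{thmexistence}. Your argument is the natural adaptation of that proof and is, if anything, more careful on two points. First, you correctly observe that the regularization via the auxiliary map $\phi$ is unnecessary here: unlike \eqref{a-1} and \eqref{Kucontro}, the controlled first-order system \eqref{C-1} is written without any $1/\|x_i\|^2$ factors, so the right-hand side is polynomial in $\mathbf{x}$ and affine in $u_i$, and the singularity that forced the regularization in the second-order proof is simply absent. Second, you address the fact that $u_i$ is only $L^2$ in time by invoking Carath\'eodory theory rather than classical Picard--Lindel\"of; the paper's second-order proof glosses over this same issue by asserting local Lipschitz coefficients. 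Your computation
\begin{align*}
\frac{\dd}{\dd t}\|x_i\|^2=2\bigl(1-\|x_i\|^2\bigr)\Bigl(\frac{\kappa}{N}\sum_{k=1}^N\la x_i,x_k\ra+\la u_i,x_i\ra\Bigr)
\end{align*}
is the correct first-order analogue of the paper's invariance argument (which goes through $\la x_i,v_i\ra$ in the second-order case), and the continuation argument via the compact invariant set $(\bbs^{d-1})^N$ replaces the Gronwall bound on the velocities, which has no counterpart here. No gaps.
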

\begin{theorem}
	There exists some control $u_i^*\in L^2(0,T;\R^d)$, $i\in[N]$ and the corresponding $\{x_i^*\}_{i=1}^N$ trajectories solving the optimal control problem \eqref{C-1}-\eqref{C-1'}.
\end{theorem}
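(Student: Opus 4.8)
The plan is to apply the direct method of the calculus of variations, following the same scheme as in the proof of Theorem~\ref{thmcon} but with the compactness step adapted to the first-order dynamics. First I would note that $\tilde{\mathcal{J}}\geq 0$ and that the admissible choice $\mathbf{u}=\mathbf{0}$ gives $\tilde{\mathcal{J}}(\mathbf{0};\mathbf{x}(0))=\int_0^T\frac{1}{N}\sum_i\|x_i^0-\bar x^0\|^2\,\dd t\leq 4T$, since $\|x_i^0\|=1$ by Theorem~\ref{thmexistence1st}; hence $0\leq\inf_{\mathbf{u}}\tilde{\mathcal{J}}\leq 4T$ is finite and there is a minimizing sequence $(\mathbf{u}^n)_n\subset\mathcal{U}^N$ with $\tilde{\mathcal{J}}(\mathbf{u}^n)\to\inf\tilde{\mathcal{J}}$. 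From $\lambda\int_0^T\frac{1}{N}\sum_j\|u_j^n\|^2\,\dd t\leq\tilde{\mathcal{J}}(\mathbf{u}^n)<\infty$ the controls are uniformly bounded in $L^2(0,T;\R^d)$, so Banach--Alaoglu yields a subsequence $u_i^k:=u_i^{n_k}\rightharpoonup u_i^*$ weakly in $L^2$ for each $i$.

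Next I would extract a convergent subsequence of the corresponding trajectories. By Theorem~\ref{thmexistence1st} these satisfy $\|x_i^k(t)\|=1$, so they are uniformly bounded. The essential difference from the second-order case is that the control now enters the state equation directly and lies only in $L^2$, so $\dot x_i^k$ is bounded in $L^2(0,T;\R^d)$ rather than in $L^\infty$; consequently Cauchy--Schwarz gives $\|x_i^k(t)-x_i^k(s)\|\leq\big(\int_s^t\|\dot x_i^k\|^2\,\dd\tau\big)^{1/2}|t-s|^{1/2}$, an equi-H\"older-$\tfrac12$ bound that in particular yields equicontinuity. Arzel\`a--Ascoli then provides a further subsequence, still denoted $(x_i^k)$, and a limit $x_i^*$ with $x_i^k\to x_i^*$ uniformly on $[0,T]$ and $\dot x_i^k\rightharpoonup\dot x_i^*$ weakly in $L^2$, the limit $\dot x_i^*$ being identified with the distributional derivative of the uniform limit.

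Then I would pass to the limit in the weak formulation of \eqref{C-1}. Testing the equation for $x_i^k$ against arbitrary $\psi\in L^2(0,T;\R^d)$ and sending $k\to\infty$, the left-hand side converges by $\dot x_i^k\rightharpoonup\dot x_i^*$, while on the right the terms $\la x_j^k,\psi\ra$ and $\la x_i^k,x_j^k\ra\la x_i^k,\psi\ra$ converge by the uniform (hence strong $L^2$) convergence of the states. The linear control term $\la u_i^k,\psi\ra$ converges by weak convergence of $u_i^k$, and the nonlinear term $\la u_i^k,x_i^k\ra\la x_i^k,\psi\ra=\la u_i^k,\,x_i^k\la x_i^k,\psi\ra\ra$ converges by a weak--strong argument, since $x_i^k\la x_i^k,\psi\ra\to x_i^*\la x_i^*,\psi\ra$ strongly in $L^2$ while $u_i^k\rightharpoonup u_i^*$. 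This shows $(\mathbf{x}^*,\mathbf{u}^*)$ solves \eqref{C-1} a.e.\ with the admissible control $\mathbf{u}^*$. Finally the tracking term converges to its limit by strong convergence of the states, and $\int_0^T\frac{1}{N}\sum_j\|u_j^*\|^2\,\dd t\leq\liminf_k\int_0^T\frac{1}{N}\sum_j\|u_j^k\|^2\,\dd t$ by weak lower semicontinuity of the $L^2$ norm; together these give $\tilde{\mathcal{J}}(\mathbf{u}^*)\leq\liminf_k\tilde{\mathcal{J}}(\mathbf{u}^k)=\inf\tilde{\mathcal{J}}$, so $\mathbf{u}^*$ is a minimizer.

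The step I expect to be the main obstacle is the passage to the limit in the control-dependent nonlinear term $\la u_i,x_i\ra x_i$, the only place where a weakly convergent factor multiplies a nonlinear function of the states, so that both strong compactness of the trajectories and the weak--strong convergence principle are needed to close it. Securing that strong compactness is itself the delicate point in the first-order setting, since---unlike the second-order model, where uniform bounds on $\dot v_i$ produced equi-Lipschitz states---here one has only an $L^2$ bound on $\dot x_i^k$ and must rely on the H\"older-$\tfrac12$ estimate above in order to invoke Arzel\`a--Ascoli.
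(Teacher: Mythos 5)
Your proof is correct and follows exactly the route the paper intends: the paper omits this proof, stating only that it ``can be obtained similarly as the second order model,'' i.e.\ by the direct method used for Theorem~\ref{thmcon} (minimizing sequence, Banach--Alaoglu, Arzel\`a--Ascoli, weak--strong passage to the limit, weak lower semicontinuity). Your one genuine addition --- replacing the equi-Lipschitz bound of the second-order case by the equi-H\"older-$\tfrac12$ bound coming from the $L^2$ bound on $\dot x_i^k$ --- is precisely the adaptation the omitted proof requires, and it is handled correctly.
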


Let $\{p_i^*\}_{i=1}^N$ be adjoint variables associated to $\{x_i^*\}_{i=1}^N$. The corresponding PMP equation is given by
{\small \begin{align}\label{adjeqfirst}
	-\frac{\dd}{\dd t}p_i^*
=\frac{\kappa}{N}\sum_{j=1}^N(p_j^*-\la x_i^*, p_i^*\ra x_j^*-\la x_j^*, p_j^*\ra x_j^*-\la x_i^*, x_j^*\ra p_j^*)-\la u_i^*, x_i^*\ra p_i^*-\la x_i^*, p_i^*\ra u_i^*
	+\frac{2}{N}(x_i^*-\bar{x}^*\la x_i^*, \bar{x}^*\ra)\,,
\end{align}}
with the optimality condition
{\small \[
\textbf{u}^*=\arg\min_{\textbf{w}\in \R^{dN}}\sum_{i=1}^N\left(
\left\langle p_j^*, \frac{\dd x_j^*}{\dd t}\right\rangle+\frac{\lambda}{N}\|w_i\|^2
\right)=-\frac{N}{2\lambda}[p_j^*-\langle p_j^*, x_j^*\rangle x_j^*]_{j=1}^N.
\]}
Also, the gradient of functional $\tilde{\mathcal{J}}$ introduced in \eqref{C-1'} can be expressed as
{\small \begin{align}\label{gradfirst}
\nabla \tilde{\mathcal{J}}=\frac{2\lambda}{N}\mathbf{u}+[p_j+\langle p_j, x_j\rangle x_j]_{j=1}^N.
\end{align}}
We will apply this gradient form to the gradient descent method in Section \ref{sec:3.1}.

\section{Algorithms and numeric simulations}\label{sec:3}
In this section, we provide numeric simulations of controlled systems proposed in Section \ref{sec:2} in order to give a simple and immediate illustration of how the optimal control signal can be used to accelerate the process of reaching a consensus.

\subsection{First order model with control}\label{sec:3.1}
Firstly, we consider the first order control problem \eqref{C-1}-\eqref{C-1'}. To minimize the payoff functional $\tilde{\mathcal{J}}$ defined in \eqref{C-1'}, we apply the gradient descent with Barzilai--Borwein method \cite{BB} with the explicit form of $\nabla\tilde{\mathcal{J}}$ given in \eqref{gradfirst}.

{\small \begin{algorithm}
	\caption{An algorithm with caption}\label{FirstAlg}
	\begin{algorithmic}
		\Require $tol>0$, $k_{max}$, $\mathbf{u}^0$, $\mathbf{u}^{-1}$.
		\State $k=0$;
		\While{$\|\nabla\tilde{\mathcal{J}}\|>tol$ and $k<k_{max}$}
		\State 1) Obtain $\mathbf{x}^k$ from \eqref{C-1} with $\mathbf{u}^k$;
		\State 2) Obtain $\mathbf{p}^k$ from \eqref{adjeqfirst} with $\mathbf{x}^k$ and $\mathbf{u}^k$;
		\State 3) Compute the learning rate based on the Barzilai--Borwein method:
		\[
		\alpha_k:=\frac{\langle \mathbf{u}^k-\mathbf{u}^{k-1}, \nabla\tilde{\mathcal{J}}(\mathbf{u}^k)-\nabla\tilde{\mathcal{J}}(\mathbf{u}^{k-1})\rangle}{\|\nabla\tilde{\mathcal{J}}(\mathbf{u}^k)-\nabla\tilde{\mathcal{J}}(\mathbf{u}^{k-1})\|^2}\,;
		\]
		%\State with \eqref{gradfirst}.
		\State 4) Update $\mathbf{u}^{k+1}=\mathbf{u}^k-\alpha_k\nabla\tilde{\mathcal{J}}(\mathbf{u}^k)$;
		\State 5) $k:=k+1$;
		\EndWhile
	\end{algorithmic}
\end{algorithm}}
In Algorithm \ref{FirstAlg}, $k_{max}$ is the maximum of the number of iteration, $tol$ is the tolerance of $\|\nabla\tilde{\mathcal{J}}\|$, and we use the Runge-Kutta fourth order methods in Step 1) and Step 2).

In Figure \ref{fig1}, we compare the optimal controlled system \eqref{C-1}-\eqref{C-1'} and the control-free swarm sphere model \eqref{a-1} with the same initial data and parameters being set as
{\small \begin{align}\label{para}
N=20,\quad d=3,\quad \Delta t=0.01,\quad T=4,\quad \lambda=0.1,\quad \kappa=0.5.
\end{align}}
 As it is shown in Figure \ref{fig1} (a), the controlled system (blue) under an approximated optimal control $\mathbf{u}^*$ found with Algorithm \ref{FirstAlg} approaches to a consensus state much faster than the control-free model (red).  
% Figure \ref{fig1} (b) shows that the relative velocities of particles in uncontrolled system are much slower than particles in controlled system. One of the reason is that linear terms of $u_j$ are added to $\dot{x}_i$ directly. If we increase $\lambda>0$, then the second term of payoff functional \eqref{C-1'} increases. Also, we know that the magnitudes of $u_j$ should be decreased to reduce this term, which can be checked numerically. 
 This can also be seen in Figure \ref{fig1} (b) that while the particles of the controlled system reaches a consensus, particles of the control-free system just move small distances. 
 In summary, for the first order model, the controlled system reaches consensus much faster than the uncontrolled system.

\begin{figure}[h]
\begin{tabular}{cc}
\includegraphics[width=4cm]{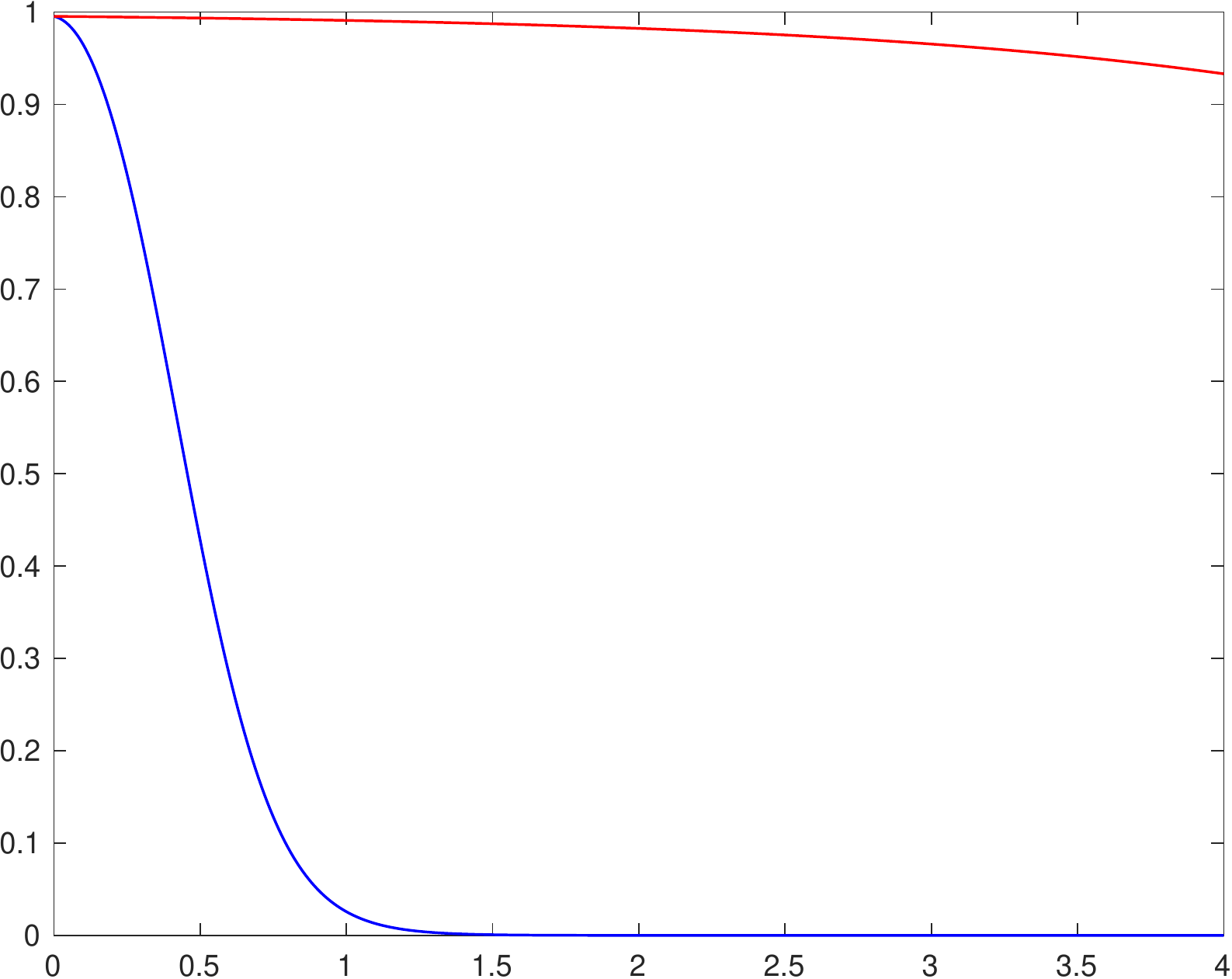}&
\includegraphics[width=5cm]{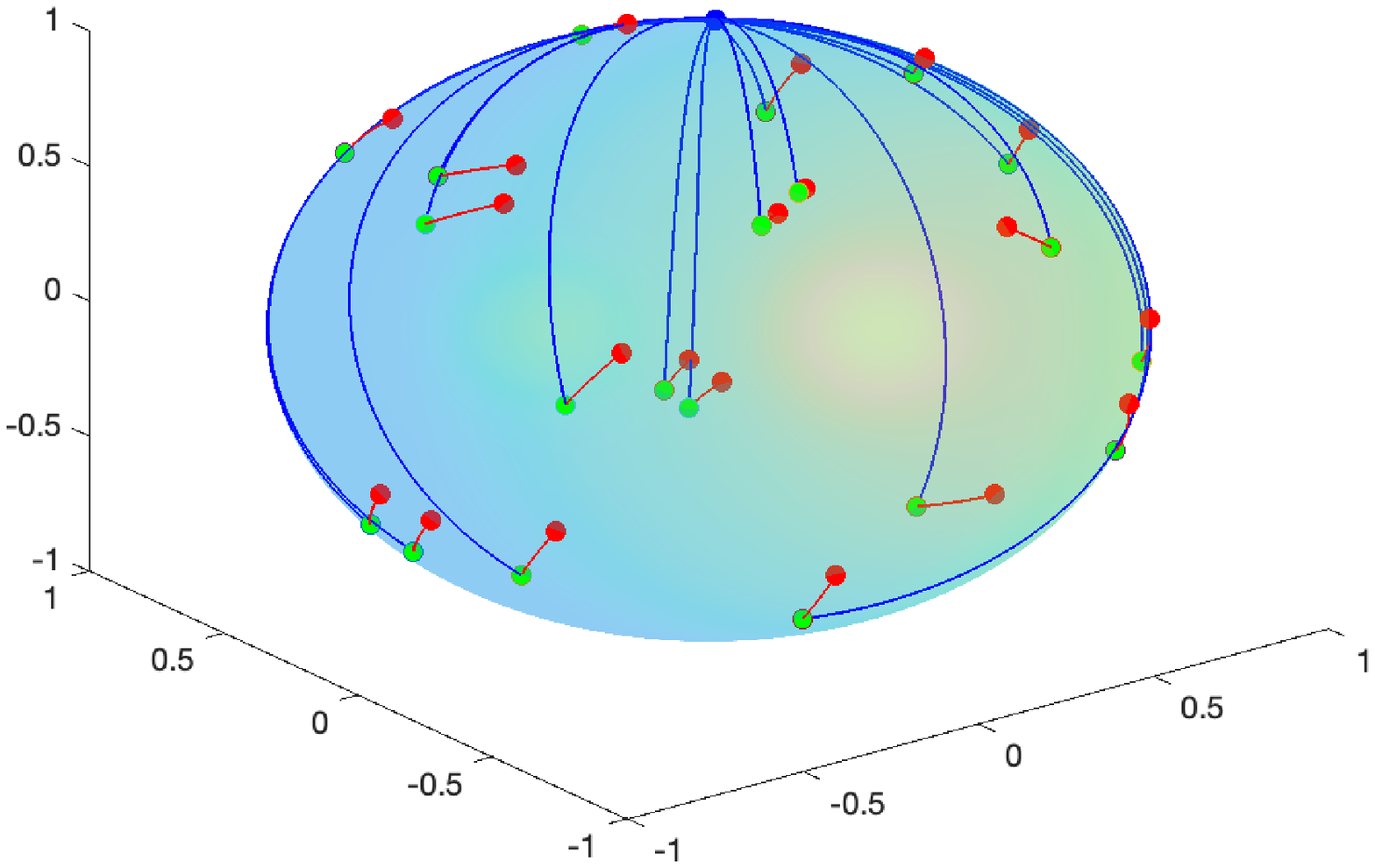}\\
(a) Variations of positions&
%(b) Variations of velocities&
%(c) Norms of $u_i$ for $i\in[N]$&
(b) Trajectories of particles
\end{tabular}

\caption{Controlled (blue) vs. control-free (red) first order dynamics. Figure (a) plots the time-evolution of the variations of positions defined as $\frac{1}{N}\sum_{i=1}^N\|x_i-\bar{x}\|^2$. Figure (b) shows the trajectories of two systems, where both of them start from the same green initial points. At the end time $T$, the optimal controlled particles reach the unique blue consensus point, while the uncontrolled particles move only small distances to red points (no consensus is obtained).
 }\label{fig1}
\end{figure}

\subsection{Second order model with control}\label{sec:3.2}
Now, we consider the second order control problem \eqref{Kucontro}-\eqref{min}. To minimize the payoff functional $\mathcal{J}$, we use the explicit form of $\nabla\mathcal{J}$ introduced in \eqref{secondgrad}. Same parameters given by \eqref{para} are used here with additionally setting the remained parameters as $m=1$ and $\gamma=1$. In  Figure \ref{fig2} we give a comparison of the second order optimal control dynamics \eqref{Kucontro}-\eqref{min} and its control-free counterpart \eqref{a-2}.

\begin{figure}[h]
\begin{tabular}{cccc}
\includegraphics[width=3.5cm]{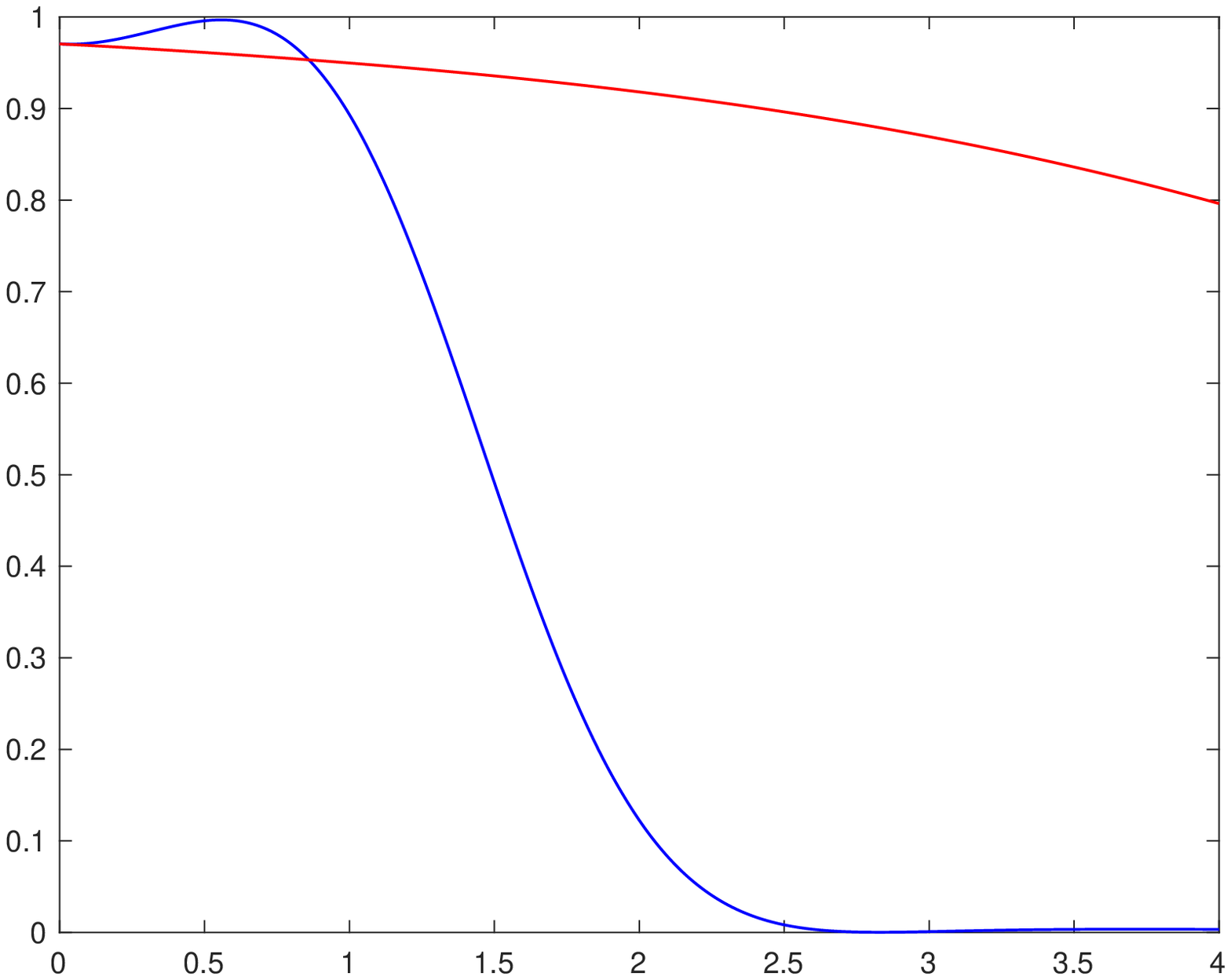}&
\includegraphics[width=3.5cm]{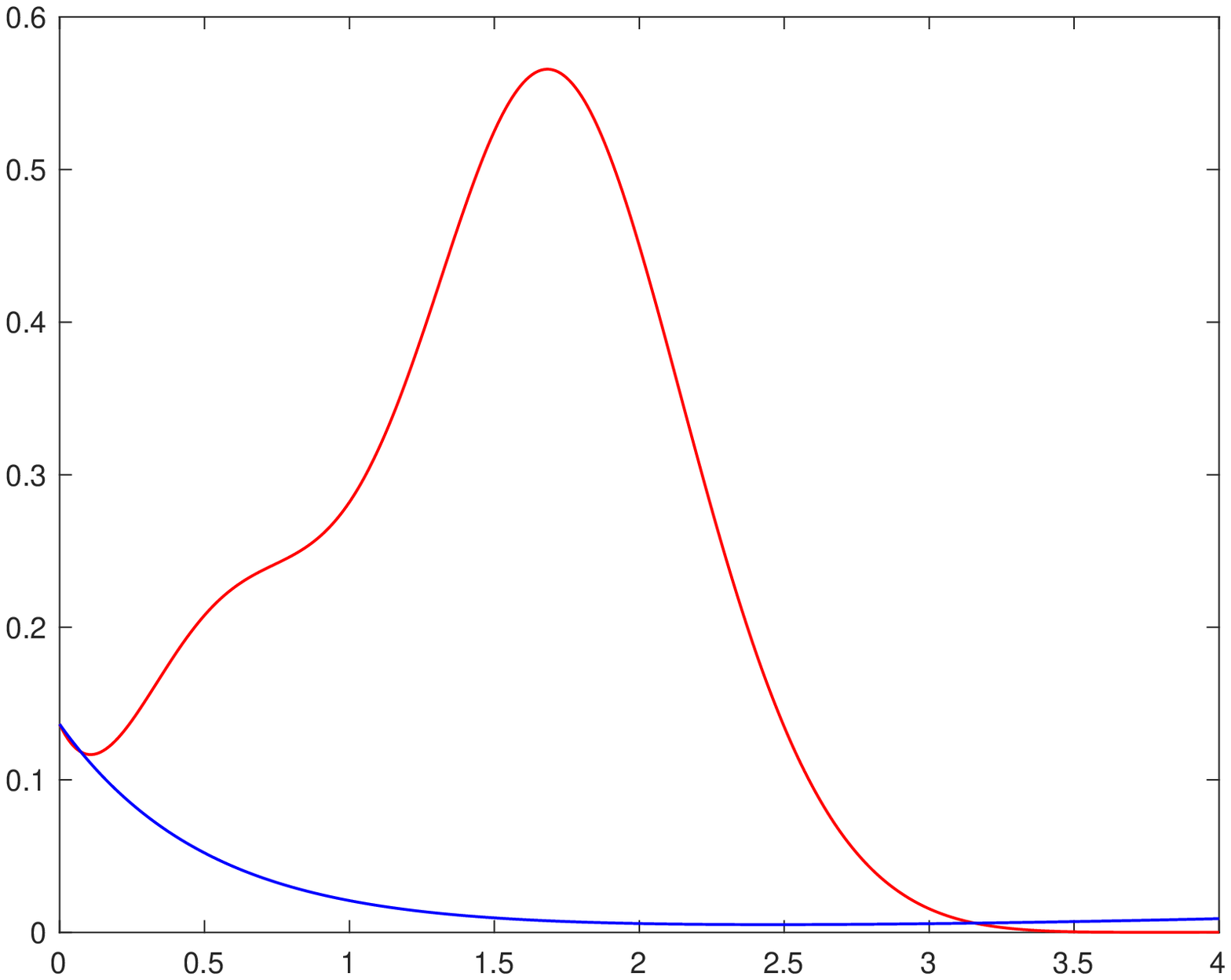}&
\includegraphics[width=4.3cm]{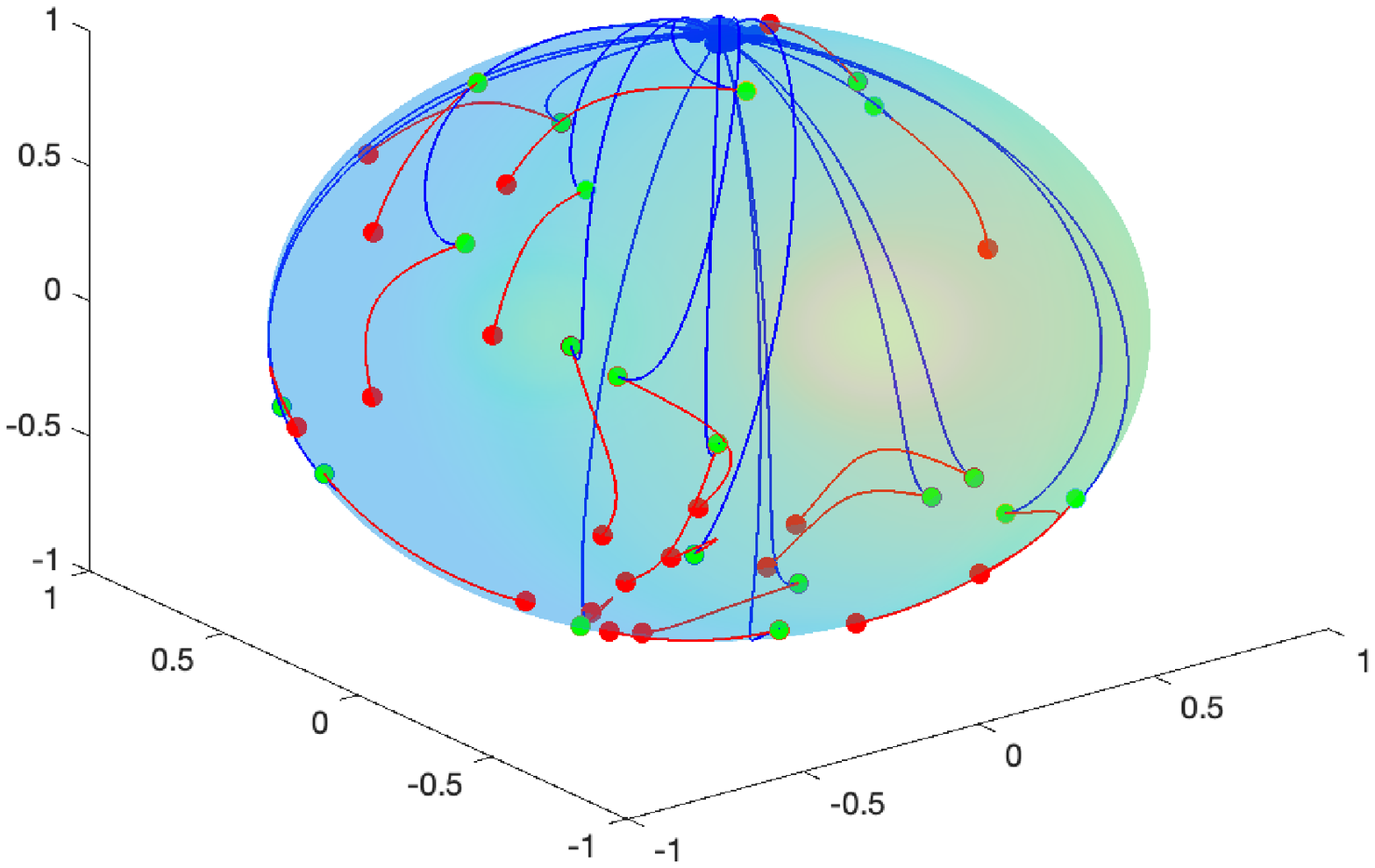}\\
(a) Variation of positions &
(b) Variations of velocities&
%(c) Norms of $u_i$ for $i\in[N]$ &
(c) Trajectories of particles
\end{tabular}
\caption{%Numeric results of Algorithm \ref{SecondAlg}. 
Controlled (blue) vs. control-free (red) second order dynamics.
Figure (a) and (b) plot the time-evolution of the variations of positions/velocities, i.e. $\frac{1}{N}\sum_{i=1}^N\|x_i-\bar{x}\|^2$ and $\frac{1}{N}\sum_{i=1}^N\|v_i-\bar{v}\|^2$ respectively. Figure (c) shows the trajectories of two systems. }\label{fig2}
\end{figure}

\bibliographystyle{abbrv}
\bibliography{contro}

\end{document}